\newtheorem{theorem}{Theorem}
\newtheorem{coro}{Corollary}
\newtheorem{remark}{Remark}
\newcommand{\NN}{{\mathbb N}}
\newenvironment{proof}{\begin{trivlist}
   \item[\hskip\labelsep{\it Proof.}]}{$\hfill\Box$\end{trivlist}}
\newcommand{\notiz}[1]{}
\title{The Champernowne constant is not Poissonian}
\author{\'Isabel Pirsic \footnote{The author is supported by the Austrian Science Fund (FWF), Project P27351-N26”.}\ , Wolfgang Stockinger \footnote{The author is supported by the Austrian Science Fund (FWF), Project F5507-N26, which is a part of the Special Research Program “Quasi-Monte Carlo Methods: Theory and Applications”.}}
\date{}
\begin{document}
\maketitle 
\begin{abstract}
We say that a sequence $(x_n)_{n \in \NN}$ in $[0,1)$ has Poissonian pair correlations if 
\begin{equation*}
\lim_{N \to \infty} \frac{1}{N} \# \left\lbrace 1  \leq l \neq m \leq N: \| x_l - x_m \| \leq \frac{s}{N} \right\rbrace = 2s
\end{equation*} 
for every $s \geq 0$. In this note we study the pair correlation statistics for the sequence of shifts of $\alpha$, $x_n = \lbrace 2^n \alpha \rbrace, \ n=1, 2, 3, \ldots$, where we choose $\alpha$ as the Champernowne constant in base $2$. Throughout this article $\lbrace \cdot \rbrace$ denotes the fractional part of a real number. It is well known that $(x_n)_{n \in \NN}$ has Poissonian pair correlations for almost all normal numbers $\alpha$ (in the sense of Lebesgue), but we will show that it does not have this property for all normal numbers $\alpha$, as it fails to be Poissonian for the Champernowne constant.    
\end{abstract} 
\section{Introduction and main result}
The concept of Poissonian pair correlations has its origin in quantum mechanics, where the spacings of energy levels of integrable systems were studied. See for example \cite{not9} and the references therein for detailed information on that topic. Rudnik and Sarnak first studied this concept from a purely mathematical point of view and over the years the topic has attracted wide attention, see e.g., \cite{ not5, not10, not1, not2, not3}. Recently, Aistleitner, Larcher and Lewko (see \cite{not6}) could give a strong link between the concept of Poissonian pair correlations and the additive energy of a finite set of integers, a notion that plays an important role in many mathematical fields, e.g., in additive combinatorics. Roughly speaking, they proved that if the first $N$ elements of an increasing sequence of distinct integers $(a_n)_{n \in \NN}$, have an arbitrarily small energy saving, then $(\lbrace a_n \alpha \rbrace)_{n \in \NN}$ has Poissonian pair correlations for almost all $\alpha$. This result implies the metrical Poissonian pair correlation property for lacunary sequences as well. In this paper the authors also raised the question if an increasing sequence of distinct integers with maximal order of additive energy can have Poissonian pair correlations for almost all $\alpha$. Jean Bourgain could show that the answer to this question is negative, see the appendix of \cite{not6} for details and a second problem which was also solved by Bourgain. Recently, the results of Bourgain have been further extended, see \cite{not9, not14, not15, not16}.   \\ 

Let $\| \cdot \|$ denote the distance to the nearest integer. A sequence $(x_n)_{n \in \NN}$ of real numbers in $[0,1)$ has Poissonian pair correlations if 
\begin{equation}\label{eq:pc1}
\lim_{N \to \infty} \frac{1}{N} \# \left\lbrace 1  \leq l \neq m \leq N: \| x_l - x_m \| \leq \frac{s}{N} \right\rbrace = 2s
\end{equation} 
for every $s \geq 0$. Due to a result by Grepstad and Larcher \cite{not8} (see also \cite{not7, not12}), we know that a sequence which satisfies property (\ref{eq:pc1}), is also uniformly distributed in $[0,1)$, i.e., it satisfies 
\begin{equation*}
\lim_{N \to \infty} \frac{1}{N} \# \lbrace 1 \leq n \leq N: x_n \in [a,b) \rbrace = b-a 
\end{equation*}
for all $0 \leq a < b \leq 1$. Note that the other direction is not necessarily correct. For instance the Kronecker sequence $(\lbrace n\alpha \rbrace)_{n \in \NN}$, does not have this property for any real $\alpha$; a fact that can be argued by a continued fractions argument or by the main theorem in \cite{not19} in combination with the famous Three Gap Theorem, see \cite{not4}. Poissonian pair correlation is a typical property of a sequence. Random sequences, i.e., almost all sequences, have Poissonian pair correlations. Nevertheless, it seems to be extremely difficult to give explicit examples of sequences with Poissonian pair correlations. We note that $(\lbrace \sqrt{n} \rbrace)_{n \in \NN}$ has Poissonian pair correlations, \cite{not17} (see \cite{not18} for another explicit construction). Apart from that -- to our best knowledge -- no other explicit examples are known. Especially, until now we do not know any single explicit construction of a real number $\alpha$ such that the sequence of the form $(\lbrace a_n \alpha \rbrace)_{n \in \NN}$ has Poissonian pair correlations.\\
 
We recall that the sequence $(\lbrace 2^n \alpha \rbrace)_{n \in \NN}$ has Poissonian pair correlations for almost all $\alpha$. In this note, we study the distribution of the pair correlations of the sequence $(\lbrace 2^n \alpha \rbrace)_{n \in \NN}$, where $\alpha$ is the Champernowne constant in base $2$, i.e., $\alpha = 0.1101110010111011 \ldots_2$. It is a well known fact that the Champernowne constant in base $2$ is normal to base 2. Moreover we know that the sequence  $(\lbrace 2^n \alpha \rbrace)_{n \in \NN}$ is uniformly distributed modulo $1$ if and only if $\alpha$ is normal, see e.g., \cite{not11}. If we want to investigate, whether the distribution of the pair correlations for some explicit given sequence is Poissonian, i.e., satisfies property (\ref{eq:pc1}), the sequence has to be uniformly distributed modulo $1$. Therefore, if we investigate the distribution of the spacings between the sequence elements of $(\lbrace 2^n \alpha \rbrace)_{n \in \NN}$, the only reasonable choice for $\alpha$ is a normal number. We obtain the following result. 
\begin{theorem}
The sequence $(\lbrace 2^n \alpha \rbrace)_{n \in \NN}$ where $\alpha$ is the Champernowne constant in base $2$, i.e., $\alpha = 0.1101110010111011 \ldots_2$ \textbf{does not} have Poissonian pair correlations.  
\end{theorem}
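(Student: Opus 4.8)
The plan is to exhibit a single value of $s$ and a subsequence of $N$ along which the normalized counting function in (\ref{eq:pc1}) diverges, so that its limit cannot equal $2s$. Write the binary expansion of $\alpha$ as $\alpha = 0.b_1 b_2 b_3\ldots_2$, where $b_1 b_2 b_3 \ldots$ is the concatenation of the binary representations of $1,2,3,\ldots$. Since multiplication by $2^n$ shifts the binary point, $x_n = \{2^n\alpha\} = 0.b_{n+1}b_{n+2}b_{n+3}\ldots_2$ is simply the $n$-th tail of this digit string. The elementary but decisive observation is that if the first $K$ binary digits of $x_l$ and $x_m$ agree, then $\|x_l - x_m\| \le |x_l - x_m| \le 2^{-K}$. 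Thus, to produce many pairs with $\|x_l - x_m\| \le s/N$, it suffices to produce many pairs of indices $l \neq m \le N$ whose length-$K$ digit windows $b_{l+1}\ldots b_{l+K}$ and $b_{m+1}\ldots b_{m+K}$ coincide, for a suitable $K$ with $2^{-K} \le s/N$.

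I would take $s = 1$ and let $N = N_k$ be the index at which the block listing all $k$-digit integers ends, so that $N_k = (k-1)2^k + 1 \sim k 2^k$; correspondingly I set $K = k + \lceil \log_2 k\rceil$, which guarantees $2^{-K} \le 1/N_k$. The heart of the argument is a counting identity inside this block. Each of the $2^{k-1}$ integers of bit length $k$ contributes $k$ successive length-$k$ windows to the digit string (its own bits, progressively replaced by the leading bits of its successor), and one checks that, up to carry corrections, the window obtained from integer $j$ at offset $i$ is precisely the cyclic rotation of the $k$-bit string of $j$ by $i$ places. Consequently each $k$-bit string $R$ occurs as a length-$k$ window exactly $\mathrm{wt}(R)$ times, where $\mathrm{wt}(R)$ denotes its number of ones, and the number of ordered pairs of positions in the block sharing a common length-$k$ window is
\begin{equation*}
\sum_{R \in \{0,1\}^k} \mathrm{wt}(R)^2 = (k^2 + k)\,2^{k-2},
\end{equation*}
whence the number of \emph{distinct} such pairs is $k(k-1)2^{k-2} \sim k^2 2^{k-2}$. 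This is superlinear in $N_k$, exceeding it by a factor $\sim k/4$.

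The remaining step, and the point I expect to be the main obstacle, is to show that this abundance of matches survives when the window length is refined from $k$ to $K = k + \lceil \log_2 k\rceil$, i.e. at the scale $1/N_k$ genuinely relevant to (\ref{eq:pc1}). The mechanism that makes the Champernowne constant fail is exactly that once a window is long enough to contain a full $k$-bit integer, its continuation is \emph{determined} by that integer through the increment map: the extra $\lceil \log_2 k\rceil$ digits appended to a rotation $R$ are forced to repeat the first $\lceil \log_2 k\rceil$ digits of $R$, so refining the scale does not separate pairs that already match at length $k$. Making this precise requires bounding the offsets where a carry in $j \mapsto j+1$ propagates far enough to spoil the match, and those where the window spills across an integer boundary into $j+2$; both are $o(1)$ proportions of offsets, so a $(1-o(1))$ fraction of the $\sim k^2 2^{k-2}$ matches persist. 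Granting this, every surviving pair satisfies $\|x_l - x_m\| \le 2^{-K} \le 1/N_k$, so the count in (\ref{eq:pc1}) with $s=1$ is at least $(1-o(1))k^2 2^{k-2}$, and dividing by $N_k \sim k2^k$ gives a value $\gtrsim k/4 \to \infty$ as $k \to \infty$. Since this contradicts convergence to $2s = 2$, the sequence is not Poissonian. I would expect the only remaining bookkeeping beyond the carry analysis to be the contribution of the earlier blocks and of windows straddling the block boundary, all of which are lower order and can only increase the count.
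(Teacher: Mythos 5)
Your proposal is correct, and while its skeleton coincides with the paper's (both take $s=1$, take $N$ essentially at the end of the block of all $k$-bit words --- the paper writes $d$ for your $k$, sets $e$ so that $d=2^e$, hence $e\approx\lceil\log_2 k\rceil$, and takes $N=2^{d+e}=d2^d\approx N_k$ --- and both derive divergence from repeated bit-windows of length about $k+\log_2 k$ inside that block), the counting engine is genuinely different. The paper works directly at the full window length $d+e$: it observes that a repeated window must have its first $e$ bits equal to its last $e$ bits, formulates carry rules (BF1--BF3), and runs a case analysis over the number and placement of ones in the middle block, producing the double sum $2^{e}\sum_{\kappa}\sum_{j}(\kappa-j)^2\binom{d-e-j-1}{\kappa-j}$, of which a single term already gives $\gg\sqrt{d}\,2^{d+e}=\sqrt{d}\,N$. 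You instead split the count in two: an exact count at the shorter length $k$ via the cyclic-rotation correspondence (the window at offset $i$ of the word $c_n$ is the rotation $\sigma^i(c_n)$ absent carries, so each $R\in\{0,1\}^k$ occurs $\mathrm{wt}(R)$ times, giving $\sum_R\mathrm{wt}(R)(\mathrm{wt}(R)-1)=k(k-1)2^{k-2}\sim\frac{k}{4}N_k$ ordered pairs), followed by a persistence lemma saying that refining the window to length $k+\lceil\log_2 k\rceil$ keeps a $(1-o(1))$ fraction of these pairs, because for carry-free positions the appended digits are forced to repeat the initial digits of $R$. The obstacle you flag does close exactly as you estimate: positions spoiled by carries at offsets $i\le k-\lceil\log_2 k\rceil$ number $O(2^k)$ in total (the fraction of words affected at offset $i$ is $2^{\,i+\lceil\log_2 k\rceil-k}$, geometric in $i$), positions whose window runs into a third word number $O(2^{k-1}\log_2 k)$, and since each position lies in at most $2(k-1)$ ordered matching pairs the loss is $O(k2^k\log_2 k)=o(k^2 2^k)$. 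What your route buys is a cleaner count (a closed form via the weight identity, no pattern taxonomy) and the sharper divergence rate $(k-1)/4$ instead of the $\sqrt{d}$ the paper extracts from a single term (though the paper's full sum is of the same order $d^2 2^{d-2}$); what the paper's route buys is that counting at full length $d+e$ from the start makes the persistence step unnecessary, and its detailed taxonomy is what gets reused in the appendix to count the additional near-match pairs (windows differing by a carry, of the forms $\dots01\dots1$ versus $\dots10\dots0$), which your argument does not see --- but those are not needed for the theorem, so your strategy is complete as it stands.
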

This paper was initiated by the conjecture of G.\ Larcher (mentioned during a personal discussion) that all normal numbers are Poissonian, due to the lacunarity of $(\lbrace 2^n \rbrace)_{n \in \NN}$. To make it more tangible why this conjecture is reasonable, we recall that Kronecker sequences are not Poissonian for any $\alpha$ and $(\lbrace \alpha n^d\rbrace)_{n \in \NN}$, $d \geq 2$ is Poissonian for almost all $\alpha$, whereby it is known that $\alpha$ has to satisfy some Diophantine condition, see e.g., \cite{not3}. Hence, one would expect the sequence $(\lbrace 2^n \alpha \rbrace)_{n \in \NN}$ to have the Poissonian property for all normal numbers $\alpha$, as it shows less structure than the Kronecker and polynomial sequences. The motivation to study the sequence described in Theorem 1 was to find the first explicit example of a sequence having Poissonian pair correlations. At least our result allows to immediately deduce that the sequence $(\lbrace 2^n \alpha \rbrace)_{n \in \NN}$ \textbf{cannot} have Poissonian pair correlations \textbf{for all} normal numbers $\alpha$. \\

To prove Theorem 1 we use elementary combinatorics. We give a short outline of the proof. Let $e, d$, be two integers, where $d=2^e$ is understood to be very large compared to $e$. Further, we set $s=1$ and $N=2^{d+e}$ in (\ref{eq:pc1}). The reason for choosing $N$ in such a manner is the following. The Champernowne constant is the concatenation of the numbers which have a digit expansion (with a leading 1) in base $2$ of length $1, 2, 3, \ldots, d, \ldots$ and so forth. In order to account for all blocks of words of length $1, \ldots, d$, we have to choose $N$ large enough, i.e., in our case at the beginning of the block of words having length $d+1$. Note that the length of the block containing the words of length $d$ is $d2^{d-1}=2^{d+e-1}$ and for a very large $e$ all previous blocks of words with length $1, \ldots, d-1$ have in total approximately this length. We then count the occurrence of bit patterns (in the block of words of length $d$), which correspond to shifts of $\alpha$ having a distance (i.e., the Euclidean distance on $\mathbb{R}$) $<1/N$ (we will henceforth abbreviate to simply saying that the patterns have this distance). If we have two patterns which match in the first $d+e$ bits or which are of the form $\underbrace{a_1 a_2 \ldots 0 1 \ldots 1}_{d+e} b_1 b_2 b_3 \ldots$ and $\underbrace{a_1 a_2 \ldots 1 0 \ldots 0}_{d+e} c_1 c_2 c_3 \ldots $ with $c_1 c_2 c_3 \ldots < b_1 b_2 b_3 \ldots$, then their distance is $< 1/N$. It turns out that already the number of pairs matching in the first $d+e$ bits (in the block of words of length $d$) yields a too large contribution. The second case is studied in the appendix. Though the number of pairs (with distance $<1/N$) is small compared to the first case, it is of interest in its own right to see how to count the occurrence of such patterns.   
\section{Proof of the main Theorem}
\begin{proof} 

Let $s=1$ and set $N=2^{d+e}$ where $d$ and $e$ are defined as in the previous section (at first we will not use the relation between $d$ and $e$, though). 
Let a bit pattern $a_1\dots a_w$ be given where $w=d+e$, $e>0$. We are aiming to count
the occurrences of the pattern in the full block 
$$c_{0,1}\dots c_{0,d}\dots c_{2^{d-1}-1,1}\dots c_{2^{d-1}-1,d},$$
and put $c_{n}:=2^{d-1}+n = \sum_{i=1}^d c_{n,i}2^{i-1}$. Note that $c_{i,1}=1$ for $i=0, \ldots, 2^{d-1}-1$.
That is, the pattern has an overlap of $e$ bits to the word length $d$.
The overlap $e$ is understood to be small.

First, we investigate the patterns where the first $e$ bits
match the last ones, i.e., $a_i=a_{d+i}$ for $i=1,\dots,e$. 
We denote the index before the start of a possible matching word by
$z\geq0$, i.e., if a match occurs then there is an $n$ such that
$$a_{z+1}a_{z+2}\dots=c_{n,1}c_{n,2}\dots$$ and at least one of
$$a_{z  }a_{z-1}\dots=c_{n-1,d}c_{n-1,d-1}\dots,$$
$$a_{z+d+1}a_{z+d+2}\dots=c_{n+1,1}c_{n+1,2}\dots.$$

\textbf {Basic Fact (BF1):} for a match to occur, $a_{z}$ must not
equal $a_{z+d}$ since these bits correspond to the least significant
bits of consecutive digit expansions $c_{n-1},c_n$.

\textbf{BF2:}
As a first consequence of BF1, $z$ must be zero or greater
than $e$ since otherwise $a_z=a_{z+d}$ and similarly $z$ must be at
most $d$, else $a_{z-d}=a_{z}$, i.e., $z\in\{0,e+1,\dots,d\}$. 

\textbf{BF3:}
Furthermore, for a match with $z>0$ to occur it is necessary that
$a_{z+1}=1$ and at least one zero occurs in the sequence $a_{e+1}\dots
a_z$. This excludes subpatterns of the forms \[ a_{e+1}\dots a_{z+1} =
1\dots10  \text{ or } 1\dots11 ,  \] which cannot occur due to the
fact that in this case $c_n=c_{n-1}+1$ has carries affecting $a_d\dots
a_{d+e}$.

We now make case distinctions according to the number $k$ of ones in
the `middle block' $a_{e+1}\dots a_d$.

If $k=0$ the pattern can occur in the full block only if $z=0$ and
$a_{z+1}=c_{n,1}=1$ which cannot happen in the middle block or if 
$a_1=0$.

If $k=1$ this type of pattern (or `meta-pattern') can occur 
in the case $a_{e+1}=1$ only if $a_1=1$ and $z=0$; BF3 forbids
$z>0$ and for $z=0$ again $a_{z+1}=a_1=1$ is necessary. If the $1$
appears later in the middle block, again $z=0$ is possible, if $a_1=1$
or $z=j$ if $j+1$ is the index of the $1$. This gives
\begin{itemize}
 \item $2^{e-1} + 2^{e-1}(d-e-1)$ patterns occuring only one time 
 \item $2^{e-1}(d-e-1)$ patterns occuring two times.
\end{itemize}
 
Let us also look at the case $k=2$: 
first, $a_{e+1}=a_{e+2}=1$ by BF2 again necessitates $z=0$ and $a_1=1$,
and can occur only in one match. If $a_{e+1}=1\neq a_{e+2}$ there are
one or two possible matches in dependence of $a_1=0$ or $1$. Finally, 
two or three possible matches can happen if both ones occur later in
the middle block. The tally thus is:
\begin{itemize}
 \item one match: $2^{e-1}(1+(d-e-2))$ patterns
 \item two matches: $2^{e-1}((d-e-2)+\binom{d-e-1}{2})$ patterns
 \item three matches: $2^{e-1}\binom{d-e-1}{2}$ patterns
\end{itemize}

We can now present the general case $2<k<d-e$: 

\begin{itemize}
 \item $a_1=0, a_{e+1}=1,a_{e+2}=0$: 
   we have $2^{e-1} \binom{d-e-2}{k-1} $
  patterns having $k-1$ matches
 \item $a_1=0, a_{e+1}=a_{e+2}=1$: 
 let $a_{e+1}=\dots=a_{e+j}=1\neq a_{e+j+1}$, i.e., there are $j$
 consecutive ones at the start of the middle block followed by a zero.
 Then there are $k-j$ ones left to distribute on $d-e-j-1$ places.
 We have a match for each of those ones, so
 there are\\ $2^{e-1} \binom{d-e-j-1}{k-j}$ patterns having $k-j$
 matches, where $j=1,\dots,k$.  
 \item $a_1=0, a_{e+1}=0$: we have $2^{e-1} \binom{d-e-1}{k} $
  patterns having $k$ matches
 \item $a_1=1, a_{e+1}=0$: we have $2^{e-1}\binom{d-e-1}{k}$
  patterns having $k+1$ matches
 \item $a_1=1, a_{e+1}=1$: 
 let $a_{e+1}=\dots=a_{e+j}=1\neq a_{e+j+1}$, i.e., there are $j$
 consecutive ones at the start of the middle block followed by a zero.
 Then there are $k-j$ ones left to distribute on $d-e-j-1$ places.
 We have a match for each of those ones plus one attributed to $z=0$, i.e.,
 there are\\ $2^{e-1} \binom{d-e-j-1}{k-j}$ patterns having $k-j+1$
 matches, where $j=1,\dots,k$.  
\end{itemize}

The case $k=d-e$ has only patterns matching just once.

Taking all together we get the following formula for the number of
pairs $c_{n_1,i_1},c_{n_2,i_2}$ such that there is a match in 
(at least) $w$ bits. Note that the pairs are ordered. 
\begin{align}
 2^{e} \sum_{k=1}^{d-e-1} 
 \Bigg(&
  \sum_{j=0}^{k-1} \binom{k-j}{2} \binom{d-e-j-1}{k-j} + \nonumber \\ 
   &
  \sum_{j=0}^{k-1} \binom{k-j+1}{2} \binom{d-e-j-1}{k-j} 
 \Bigg) \nonumber
\\=
 2^{e} \sum_{k=1}^{d-e-1} 
 &
  \sum_{j=0}^{k-1} {(k-j)}^{2} \binom{d-e-j-1}{k-j} \nonumber 
\end{align}
By Stirling's formula, $\binom{2n}{n} \gg \frac{2^{2n}}{\sqrt{n}}$, and considering $j=0$ and $k=\lfloor \frac{d-e-1}{2} \rfloor$ in the above sum, we obtain a contribution of magnitude 
\begin{equation*}
2^ek^2 \binom{d-e-1}{k} \gg \frac{d^2}{\sqrt{d}}2^d = \sqrt{d} 2^{d+e}.
\end{equation*}
Therefore, if we divide this amount of pairs by $N = 2^{d+e}$ (recall that we set $d=2^e$) and consider $e \to \infty$, we obtain $+\infty$ in the limit and deduce that the pair correlations distribution cannot be asymptotically Poissonian.  \\

For the sake of completeness, we study two further types of patterns. We will see that these two structures of patterns yield a negligible amount of pairs.   
The next type of pattern is where the matching $c_n$ ends in a string
of ones, inducing a chain of carries for $c_{n+1}$. I.e., there are
$j_0,j_1$, $1\leq j_0 \leq e < j_1\leq d-1$ such that 
\[
 a_{j_0}a_{j_0+1}\dots a_e a_{e+1}\dots a_{j_1}a_{j_1+1} =
 01\dots11\dots10
\]
and $a_i=a_{d+i}$ for $1\leq i<j_0$, $a_i=1-a_{d+i}$ for $j_0\leq
i\leq e$.  Again, a possible matching $c_n$ can obviously not start
with an index earlier than $e+1$ since then inevitably mismatches
$a_i\neq a_{d+i}$ that cannot be accounted for by carries occur. But
then each of the consecutive ones can be taken as start of a
$c_n$-block, i.e., $z=e,\dots,j_1-1$ are all possible, giving $j_1-e$
matches.  Given $j_0,j_1$ there are $2^{j_0-1+\,d-j_1-1}$ such patterns.
For the case $j_1=d$ there are $d-e$ matches as well, plus an
additional one if $a_1=1$. Both subcases have $2^{j_0-2}$ according
patterns for $j_0\geq2$ and additionally there is one further case
for $j_0=1$ with $d-e$ matches, the pattern $01^{d}0^{e-1}$.
The number of ordered pairs thus equals:
\begin{align*}
 2 \Bigg( &\sum_{j_0=1}^{e} \sum_{j_1=e+1}^{d-1} 
       \binom{j_1-e}{2} 2^{j_0+d-j_1-2} \\&
 + \left( \binom{d-e}{2}+\binom{d-e+1}2\right) 
  \sum_{j_0=2}^e 2^{j_0-2}
 + \binom{d-e}2 \Bigg)  \\
= \frac{2^e-1}{2^{e-1}}&(2^d-2^e)-(d-e)2^e   
\quad <\quad 2^{d+1}.
\end{align*}

The next type of pattern, which only yields a negligible amount of relevant pairs, is the one where $a_1 \ldots a_e =1  \ldots 1$ and $a_{e+1} \ldots a_{z+1} = 1  \ldots 1$, where $z  \in \lbrace e, \ldots, d-1 \rbrace$. As a consequence thereof $a_{d+1} \ldots a_{d+e} = 0 \ldots 0$. Hence, we have 
\begin{equation*}
\sum_{i=3}^{d-e} \binom{i-1}{2} = \frac{1}{6} (d-e-2)(d-e-1)(d-e)
\end{equation*}
pairs with distance $ < 1/N$. 
\end{proof}
\begin{remark}
In the proof we have only studied the case, where a fixed bit pattern of length $w$ overlaps two words of length $d$. Of course, an overlap of the pattern with three words might also occur, but these cases yield a small number of pairs with prescribed distance. Therefore we have omitted the exact study of these structures. If the relative number of pairs in the block of words of length $d$ would have given a number less than $2s$, then a study of the occurrence of the pattern in the block of words of length $d-1$ (and so forth) would have been necessary.    
\end{remark}
\begin{remark}
The techniques from above can of course be adapted to any other base $b$, i.e., we can conclude that the Champernowne constant in base $b \geq 2$ (note that the Champernowne constant in base $b$ is normal to base $b$) is not Poissonian.
\end{remark}
\section{Open Problems and Outlook}
In this section we first want to state an open problem, which involves the notion of weak pair correlations (introduced by Steinerberger in \cite{not12}), a concept that relaxes the requirements of (\ref{eq:pc1}). \\
  
We state the following open problem. \\

\textbf{Problem 1:} Does the sequence $(x_n)_{n \in \NN}= (\lbrace 2^n \alpha \rbrace)_{n \in \NN}$, where $\alpha$ is the Champernowne constant in base $2$, satisfy the notion of weak pair correlation, i.e., is there an $0 <\beta < 1$, such that 
\begin{equation*}
\lim_{N \to \infty} \frac{1}{N^{2- \beta}} \# \left\lbrace 1 \leq l \neq m \leq N: \| x_l - x_m \| \leq \frac{s}{N^{\beta}}  \right\rbrace = 2s
\end{equation*} 
for every $s \geq 0$? \\

Further, we still need to find an explicit construction of an $\alpha$ such that a sequence of the form $(\lbrace a_n \alpha \rbrace)_{n \in \NN}$ has Poissonian pair correlations and maybe criteria which relax the definition of Poissonian pair correlations, e.g., that it possibly suffices to show that (\ref{eq:pc1}) holds for $s \in \mathbb{N}$ only. A possible approach would be to modify the Champernowne constant in a certain way, e.g., by shifts, such that we avoid the situation that we have too many patterns where the first and last $e$ bits match.     

\section{Appendix}
Though the here presented results are not needed for the proof of Theorem 1, they give additional interesting information about the pair correlation structure of the Champernowne constant and therefore we add them as appendix. \\

In the previous section we have counted the occurrence of a bit pattern $a_1 \ldots a_w$ in the full block of words of length $d$. Now, we consider patterns of the form  $b:=a_1 a_2 \ldots a_w b_1b_2b_3 \ldots = \underbrace{a_1 a_2 \ldots a_j 0 1 \ldots 1}_{w} b_1 b_2 b_3 \ldots$ and $c:=a^{'}_1 a^{'}_2 \ldots a^{'}_w c_1c_2c_3 \ldots = \underbrace{a_1 a_2 \ldots a_j 1 0 \ldots 0}_{w} c_1 c_2 c_3 \ldots$, with $b_1 b_2 b_3 \ldots > c_1 c_2 c_3 \ldots$. These two types of bit words also have a distance less than $1/N$. \\

We only study the case $b_1=1$ and $c_1=0$ in detail. The ideas used for the special case can be readily generalized. Therefore, we are aiming at counting the occurrences of bit blocks of the form $B: = \underbrace{a_1 a_2 \ldots a_j 0 1 \ldots 1}_{w} 1$ (in the full block of words of length $d$) and the ones of the form $C: =\underbrace{a_1 a_2 \ldots a_j 1 0 \ldots 0}_{w} 0$. \\
\begin{coro}
The patterns of the form $B$ and $C$ yield for $j=d$ at least 
\begin{equation}\label{eq:eq122}
2^{d-e-1}(d-e-5)
\end{equation}
pairs with distance less than $1/N$. For $j > d$ we obtain at least 
\begin{equation}\label{eq:eq123}
2^{-1 - e} (2^e-2) (2^{2 + e} + 2^d d + 2^{1 + e} d - 
   2^d e - 2^{1 + e} e - 2^{2 + d})
\end{equation}
pairs with distance less than $1/N$. 
\end{coro}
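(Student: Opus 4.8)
The plan is to reduce the Corollary to two enumeration problems—one for $j=d$ and one for $j>d$—by first confirming that every pair consisting of a shift whose expansion begins with the $(w{+}1)$-bit block $B$ and a shift whose expansion begins with $C$ (sharing the common prefix $a_1\ldots a_j$) really does lie at distance $<1/N$, and only then counting how often $B$ and $C$ occur inside the block of words of length $d$. Reading the last $w+1-j$ bits of each block as an integer, $B$ contributes $0\,1^{w-j}$ and $C$ contributes $1\,0^{w-j}$, so $C=B+1$ and the two associated reals $x$ (type $B$) and $y$ (type $C$) satisfy
\begin{equation*}
 y-x = 2^{-(j+1)} - \sum_{i=j+2}^{w+1} 2^{-i} + \tau
     = 2^{-(w+1)} + \tau , \qquad |\tau| < 2^{-(w+1)},
\end{equation*}
where $\tau=b_2b_3\ldots - c_2c_3\ldots$ is the contribution of the free tail. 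Since we impose $b_1=1$, $c_1=0$ as in the statement, this gives $0<y-x<2^{-w}$, i.e. $\|x-y\|<1/N$. Hence the number of distance-$<1/N$ pairs is at least $\sum_{a_1\ldots a_j} n_B(a_1\ldots a_j)\,n_C(a_1\ldots a_j)$, where $n_B,n_C$ count the occurrences of the respective blocks with the prescribed prefix inside $c_{0,1}\ldots c_{2^{d-1}-1,d}$.

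Next I would count these occurrences using the same structural facts as in the proof of Theorem~1: the word boundaries sit at the multiples of $d$, every word begins with a $1$, and $c_{n+1}=c_n+1$, so carries govern which blocks may straddle a boundary. The decisive asymmetry is that the pivot bit of $B$ is a $0$ and hence can never coincide with a leading bit (by BF1/BF3), whereas the pivot bit of $C$ is a $1$ and may; this forces $B$ and $C$ to be realised by \emph{different} alignments relative to the word grid, and it is precisely the alignment that the index $j$ records. For $j=d$ the pivot lands exactly on the first interior boundary of the window, so $C$ arises from an aligned occurrence (with $a_1\ldots a_d=c_n$ and $c_{n+1}$ beginning $1\,0^{e}$) while $B$ arises from a straddling occurrence whose prefix $c_n$ is split across two words; the leading $1$ of $c_n$ and the last $e$ bits needed to sustain the carry are forced, leaving $d-e-1$ free prefix bits (the factor $2^{d-e-1}$) and a linear number of admissible positions, collapsing to (\ref{eq:eq122}). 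For $j>d$ the pivot lies strictly inside the second word, so one enumerates the admissible starting positions $z$ together with the terminal run of ones producing the carry, over the range $j\in\{d+1,\dots,d+e-1\}$; summing the resulting per-$j$ counts—a geometric sum in the boundary indices times a $\binom{\,\cdot\,}{2}$ pair factor, of the same flavour as the first auxiliary computation in the appendix—yields the closed form (\ref{eq:eq123}).

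Both bounds are stated as \textbf{at least} because I would keep only a clean subfamily of occurrences (and ordered pairs, as in the main proof) and discard the few genuinely boundary cases; this keeps the estimate a valid lower bound while avoiding delicate edge corrections. The main obstacle will be the bookkeeping in this counting step: one must simultaneously respect the carry and leading-bit constraints that forbid certain alignments, guarantee that each counted $B$-occurrence is paired only with a $C$-occurrence sharing the \emph{identical} length-$j$ prefix, and avoid double counting across the admissible positions $z$. Getting this shared-prefix matching right—rather than naively multiplying the total occurrence counts of $B$ and of $C$—is the crux, and it is exactly what distinguishes the clean $j=d$ bound from the more intricate $j>d$ expression.
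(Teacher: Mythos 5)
Your framework is the same as the paper's, and the parts you actually carry out are correct: the explicit verification that a $B$-occurrence and a $C$-occurrence sharing the length-$j$ prefix yield reals at distance strictly between $0$ and $2^{-w}=1/N$ (the paper only asserts this), and the reduction of the pair count to $\sum_{a_1\ldots a_j} n_B\, n_C$ over shared prefixes, with ordered pairs and per-prefix matching rather than a naive product of totals. This is precisely the structure behind the paper's formulas, where the per-prefix counts appear as shift counts: $n_C = l-1$ and $n_B = k-l$ (resp.\ $k-l+1$).

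The gap is that the enumeration itself --- which is the entire content of the Corollary --- is never performed. The two closed forms arise only after one (i) classifies the admissible patterns by the number $k$ of ones in the middle block $a_{e+1}\ldots a_d$ and the length $l$ of its initial run of ones, (ii) shows that $C$ then admits $l-1$ shifted occurrences (forced by the carry and $c_1=0$) while $B$ admits $k-l$, resp.\ $k-l+1$ when $a_{d+1}=1$, and (iii) evaluates the resulting sums: for $j=d$ the count is $2\sum_{k=2}^{d-e-1}\sum_{l=1}^{k-1}(l-1)(k-l)\binom{d-e-l-1}{k-l}$, which must actually be computed (it equals $2^{d-e-1}(d-e-5)+2(d-e)+2$, whence the bound (\ref{eq:eq122})), and for $j>d$ each $j$ carries a factor $2^{j-d}$, so that summing over $d+1\le j\le d+e-1$ produces the factor $2^e-2$ visible in (\ref{eq:eq123}). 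Your proposal replaces all of this by the assertions that the $j=d$ count ``collapses to'' (\ref{eq:eq122}) and that a ``geometric sum \ldots\ times a $\binom{\cdot}{2}$ pair factor'' yields (\ref{eq:eq123}); no case analysis, no shift counts, and no evaluation of any sum is given, so neither formula is derived. The $\binom{\cdot}{2}$ factor is moreover inconsistent with your own (correct) reduction: pairs here couple an occurrence of $B$ with an occurrence of $C$, so the per-prefix weight is a product $n_B n_C$, not a binomial coefficient $\binom{n}{2}$ as in the main theorem, where both members of a pair come from occurrences of one and the same pattern. As it stands, this is a plan for a proof rather than a proof: the combinatorial core that produces the specific quantities (\ref{eq:eq122}) and (\ref{eq:eq123}) is missing.
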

\begin{proof}
We start by studying the occurrence of the first pattern. Note that we first consider the case, where the first and the last $e$ bits of $\underbrace{a_1 a_2 \ldots a_j 0 1 \ldots 1}_{w}$ match. In the following, we distinguish several cases, depending on the position of the index $j$. Later, we will see that the only relevant cases are the ones where $j \geq d$. Thus, we will examine only those in more detail. 
\begin{itemize}
\item $j=d$: 
\begin{itemize}
\item First, let $a_{e+1} =1$ and due to $j=d$, $a_1a_2 \ldots a_e = a_{d+1}a_{d+2} \ldots a_{d+e} = 0 1 \ldots 1$. Let $k$ be the number of ones in the block $a_{e+1} \ldots a_{d}$. Then, we obtain
\begin{equation*}
\sum_{k=2}^{d-e-1} \sum_{l=1}^{k-1}(k-l) \binom{d-e-l-1}{k-l}
\end{equation*}
matches. 
\item Consider now $a_{e+1}=0$. If there exists $z \leq d$ with $a_{e+1} a_{e+2} \ldots a_{z} = 01 \ldots 1$, then this case yields 
\begin{equation*}
\sum_{k=1}^{d-e-1} \sum_{l=1}^{k} l \binom{d-e-l-2}{k-l}
\end{equation*} 
matches.  
\end{itemize}
\item $j>d$: 
\begin{itemize}
\item Let $a_{e+1}=1$. We have the structure (the first and last $e$ digits are again equal)  $a_1 \ldots a_e = a_{d+1} \ldots a_j a_{j+1} \ldots a_{d+e} = a_{d+1} \ldots a_{j} 0 \ldots 1$. In total there are 
\begin{equation*}
2^{j-d-1}\Bigg[ \sum_{k=2}^{d-e-1} \Bigg( \sum_{l=1}^{k}  (k-l) \binom{d-e-l-1}{k-l} + (k-l+1)\binom{d-e-l-1}{k-l} \Bigg) \Bigg]
\end{equation*}  
matches.
\item Let $a_{e+1}=0$. Here, we get (similar to above)
\begin{equation*}
2^{j-d} \Bigg( \sum_{k=1}^{d-e-1} \sum_{l=1}^{k} l \binom{d-e-l-2}{k-l} \Bigg)
\end{equation*}  
matches.
\end{itemize} 
\end{itemize}  
A similar study can be carried out for the structure of the second pattern mentioned at the beginning. \\ \\
It remains to check if above cases allow starting and end blocks of the form $a_1 \ldots a_e = a_1 \ldots a_{m-1} 0 1 \ldots 1$ and $a_{d+1} \ldots a_{d+e} = a_1 \ldots a_{m-1} 1 0 \ldots 0$, respectively. If $d > j > e$, or $ j  \leq e$, then this cannot happen. The case $j \geq d$ allows starting and end blocks of this form for the second pattern. \\ \\
Above, we have investigated how often (and for which cases) one of the two patterns $B$ and $C$ occurs. It remains to analyse how many matches of the respective patterns agree in the first $j$ digits. The only relevant cases are where $j \geq d$. 
\begin{itemize} 
\item $j=d$: Here we have for the pattern $B$ the structure \\ $a_1a_2 \ldots a_e = a_{d+1}a_{d+2} \ldots a_{d+e} = 0 1 \ldots 1$. For the pattern $C$ we have the feasible structure $a_1a_2 \ldots a_e = 0 1 \ldots 1$ and $a_{d+1}a_{d+2} \ldots a_{d+e} = 1 0 \ldots 0$. I.e., we obtain
\begin{equation*}
2 \sum_{k=2}^{d-e-1} \sum_{l=1}^{k-1} (l-1)(k-l) \binom{d-e-l-1}{k-l} 
\end{equation*}
pairs with distance $< 1/N$. Note that the last equation can be simplified to (\ref{eq:eq122}). 
\item $j > d$: Here, we therefore get
\begin{align*}
2^{j-d}\Bigg[ &\sum_{k=2}^{d-e-1} \Bigg( \sum_{l=1}^{k}  (l-1)(k-l)  \binom{d-e-l-1}{k-l} \\
&+ (l-1)(k-l+1)\binom{d-e-l-1}{k-l} \Bigg) \Bigg] \nonumber
\end{align*}
pairs with distance $< 1/N$. Summation for $d+1 \leq  j \leq d+e-1$ yields (\ref{eq:eq123}). \\
To make this formula more tangible, we note that for the first pattern we have shifts according to the number of ones after the first zero in the middle block $a_{e+1} \ldots a_{d}$. If $a_{d+1}=1$, then we have one additional shift. The second pattern allows (due to the necessary carry and $c_1=0$) $l-1$ shifts, where $l$ denotes the number of ones at the beginning of the block $a_{e+1} \ldots a_{d}$.  
\end{itemize} 
\end{proof}
\noindent 
\textbf{Acknowledgements.} We would like to thank Gerhard Larcher for his valuable comments, suggestions and his general assistance during the writing of this paper. 

\textbf{Author’s Addresses:} \\ 
\'Isabel Pirsic  and Wolfgang Stockinger, Institut f\"ur Finanzmathematik und Angewandte Zahlentheorie, Johannes Kepler Universit\"at Linz, Altenbergerstraße 69, A-4040 Linz, Austria. \\ \\ 
Email: isabel.pirsic(at)jku.at, wolfgang.stockinger(at)jku.at 
\end{document}